\newtheorem{theorem}{Theorem}
\newtheorem{lemma}{Lemma}
\newtheorem{definition}{Definition}
\newtheorem{remark}{Remark}
\renewcommand{\S}{\mathbf{S}}
\newcommand{\psd}{\succeq}
\newcommand{\RR}{\mathbb{R}}
\newcommand{\EE}{\mathbb{E}}
\DeclareMathOperator{\Tr}{Tr}
\DeclareMathOperator{\conv}{conv}
\DeclareMathOperator{\diag}{diag}
\DeclareMathOperator{\proj}{proj}
\DeclareMathOperator*{\E}{\mathbf{E}}
\let\Pr\relax
\DeclareMathOperator*{\Pr}{\mathbf{Pr}}
\DeclareMathOperator{\xc}{\mathbf{xc}}
\title{On polyhedral approximations of the positive semidefinite cone}
\author{Hamza Fawzi\thanks{Department of Applied Mathematics and Theoretical Physics, University of Cambridge. Email: \texttt{h.fawzi@damtp.cam.ac.uk}.}
}
\date{November 23, 2018}
\newcommand{\Den}{D}
\begin{document}

\maketitle

\begin{abstract}
Let $\Den$ be the set of $n\times n$ positive semidefinite matrices of trace equal to one, also known as the set of density matrices. We prove two results on the hardness of approximating $\Den$ with polytopes. First, we show that if $0 < \epsilon < 1$ and $A$ is an arbitrary matrix of trace equal to one, any polytope $P$ such that $(1-\epsilon)(\Den-A) \subset P \subset \Den-A$ must have linear programming \emph{extension complexity} at least $\exp(c\sqrt{n})$ where $c > 0$ is a constant that depends on $\epsilon$. Second, we show that any polytope $P$ such that $\Den \subset P$ and such that the \emph{Gaussian width} of $P$ is at most twice the Gaussian width of $\Den$ must have extension complexity at least $\exp(cn^{1/3})$. The main ingredient of our proofs is hypercontractivity of the noise operator on the hypercube.
\end{abstract}


\section{Introduction}

Let $\Den$ be the set of positive semidefinite matrices of trace equal to one, also known as the set of density of matrices:
\[
\Den = \left\{ X \in \S^n_+, \Tr(X) = 1 \right\} = \conv \left \{ xx^T : x \in \RR^n, \|x\|_2 = 1 \right\}
\]
where $\S^n_+$ is the set of $n\times n$ real symmetric positive semidefinite matrices. It is well-known that $\Den$ is not polyhedral.
We consider the following question: how well can we approximate $\Den$ with a polytope of small complexity?

To make this question precise, we need to specify the notion of \emph{approximation}, as well as the measure of \emph{complexity} of polytopes. 
For the latter we use the \emph{extension complexity} of polytopes defined as follows:
\begin{definition}
\label{def:xc}
The \emph{extension complexity} of a polytope $P$ is the smallest integer $k$ such that $P$ can be expressed as the linear image of a polytope with $k$ facets.
\end{definition}
The extension complexity was first studied systematically by Yannakakis in \cite{yannakakis1991expressing} and has recently attracted a lot of attention, see e.g., \cite{barvinok2006computational,kaibel2011extended,pashkovich2012extended,litvak2014approximation,fiorini2015exponential,rothvoss2014matching,fawziphdthesis,kothari2017approximating}. This definition is motivated by computational aspects: if $P$ has extension complexity $N$, then optimizing a linear function on $P$ can be formulated as a \emph{linear program} of size $N$. The extension complexity is always smaller than the number of vertices and facets, and in fact it can be exponentially smaller (see comments below for more on this). The extension complexity is also invariant under polarity.

For the definition of \emph{approximation}, we consider in this paper two different notions. First, we consider polytopes $P$ that satisfy $(1-\epsilon)  (\Den-A) \subset P \subset \Den-A$ where $A$ is an arbitrary matrix of trace equal to one. We now state our first result:
\begin{theorem}
\label{thm:main}
For any $0 < \epsilon < 1$, there exists a constant $c > 0$ such that the following is true. Let $A$ be an $n\times n$ real symmetric matrix of trace equal to one. If $P$ is a polytope such that $(1-\epsilon) (\Den-A) \subset P \subset \Den-A$ then the extension complexity of $P$ is at least $e^{c \sqrt{n}}$.
\end{theorem}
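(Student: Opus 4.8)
The strategy is to reduce lower bounds on extension complexity to a lower bound on the *nonnegative rank* of a suitable *slack matrix*, via Yannakakis's factorization theorem. Concretely, if $P$ is a polytope with $(1-\epsilon)(D-A) \subset P \subset D-A$, then for any finite family of points $\{u_i\}$ in $(1-\epsilon)(D-A)$ (or its boundary) and any finite family of valid linear inequalities $\langle v_j, x\rangle \le 1$ for $D-A$, the matrix $M_{ij} = 1 - \langle v_j, u_i\rangle$ is nonnegative and factors through the extension complexity of $P$: $\xc(P) \ge \rank_+(M)$. So the task becomes: exhibit points on (a scaled copy of) the density matrix set and valid linear functionals whose associated slack matrix has nonnegative rank at least $e^{c\sqrt n}$.

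\medskip

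\emph{Choice of test points and functionals.} The natural choice is to index both points and inequalities by a large set of unit vectors $x \in \RR^n$, taking $u_x = (1-\epsilon)(xx^T - A)$ and the valid inequality for $D-A$ coming from the supporting hyperplane of $D$ at $yy^T$, namely $\langle yy^T, X \rangle \le 1$ for $X \in D$, i.e. $\langle yy^T, X - A\rangle \le 1 - \langle yy^T, A\rangle$. After normalizing, the slack entry becomes proportional to $1 - (1-\epsilon)\langle x, y\rangle^2$ up to an affine correction involving $A$; the $A$-terms should cancel or be absorbed because $\Tr(A)=1$ matches the trace-one normalization. Thus, modulo bookkeeping, we are led to lower bound the nonnegative rank of a matrix of the form $\bigl[\, 1 - (1-\epsilon)\langle x_i, y_j\rangle^2 \,\bigr]$ where $x_i, y_j$ range over a well-chosen finite set of unit vectors (I would take them to be (normalized) $\pm 1$ vectors on the hypercube, or a random subset thereof, so that $\langle x_i, y_j\rangle$ concentrates).

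\medskip

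\emph{The hypercontractivity step.} To lower bound $\rank_+$ of this matrix I would use the standard route: a nonnegative factorization of rank $r$ yields a probability distribution over $r$ rank-one nonnegative "rectangles", and then an information-theoretic or analytic argument bounds how well such a mixture can approximate the target kernel. Here the target kernel $K(x,y) = 1-(1-\epsilon)\langle x,y\rangle^2$, restricted to the hypercube (with $x,y \in \{\pm 1/\sqrt n\}^n$), is — up to affine reparametrization — the noise operator $T_\rho$ acting on functions on the cube, and the key quantitative input is that $T_\rho$ is hypercontractive: it maps $L^p \to L^q$ with norm one for appropriate $1<p<q$, equivalently it is badly approximated in operator norm by low-rank nonnegative kernels. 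I would feed the nonnegative factorization into the hypercontractive inequality (in the form of a bound relating the $\|\cdot\|_{p\to q}$ norm of a rank-$r$ nonnegative operator to $r$) to conclude $r \ge e^{c\sqrt n}$. The $\sqrt n$ (rather than $n$) arises because, after the affine normalization forced by the $(1-\epsilon)$-approximation and the trace constraint, the effective noise rate $\rho$ is bounded away from $1$ only by a constant, and the hypercontractive threshold then kicks in at the right scale — this matching is exactly where the constant $c(\epsilon)$ comes from.

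\medskip

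\emph{Main obstacle.} The routine-looking but genuinely delicate part is the affine reduction: carefully choosing the test vectors and valid inequalities so that the arbitrary recentering matrix $A$ drops out (using only $\Tr(A)=1$ and symmetry/averaging over a symmetric point set), and so that the resulting slack matrix is *exactly* (a positive scaling of) a noise-operator matrix with noise parameter controlled by $\epsilon$. Getting the normalization right — ensuring the entries stay nonnegative, that the supporting hyperplanes used are genuinely valid for $D-A$, and that the parameter of the noise operator is a function of $\epsilon$ alone and not of $n$ or $A$ — is where the real care is needed; once the problem is in the clean form "nonnegative rank of a noise-operator matrix", the hypercontractivity lower bound is essentially off the shelf.
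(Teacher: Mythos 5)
Your overall skeleton --- pass to a slack matrix via the generalized Yannakakis theorem, restrict rows and columns to (normalized) hypercube vectors, and use hypercontractivity of the noise operator on the cube to lower bound the nonnegative rank --- is indeed the paper's approach. But the two steps you defer as ``bookkeeping'' and ``off the shelf'' are exactly where the proof lives, and as sketched they do not go through.

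First, the reduction for arbitrary $A$. The slack of the point $(1-\epsilon)(xx^T-A)$ against the inequality $\langle yy^T, Z\rangle \le 1 - y^TAy$ is $1 - \epsilon\, y^TAy - (1-\epsilon)\langle x,y\rangle^2$, and the term $y^TAy$ does \emph{not} cancel using only $\Tr(A)=1$: it varies with $y$, and it can even make the right-hand side $1-y^TAy$ nonpositive, so the inequality cannot be normalized to the polar form $\langle v, z\rangle \le 1$. Indeed, for general $A$ the origin need not lie in the interior of $(1-\epsilon)(\Den-A)$ at all, so the slack-matrix machinery does not apply directly. The paper instead \emph{reduces} to $A=\frac1n I_n$ by diagonalizing $A$ and replacing $P$ by the Minkowski average $Q = \frac1n\sum_i S^i P S^{-i}$ over cyclic permutation conjugates; this preserves the sandwiching (now around $\frac1n I_n$) and satisfies $\xc(Q)\le n\,\xc(P)$, costing only a polynomial factor. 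Some such symmetrization is needed; a pointwise cancellation is not available.

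Second, the hypercontractivity step. Your explanation of where $\sqrt n$ comes from (``the effective noise rate $\rho$ is bounded away from $1$ only by a constant'') is not what happens: the kernel $(1-\epsilon)\frac1n(x^Ty)^2+\epsilon$ is a fixed degree-$2$ kernel, not a noise-operator kernel, and the noise operator enters by being \emph{applied to the factorization} with $\rho=\sqrt{5/n}\to 0$. Concretely: normalize $\E[f_i]=1$ and $\sum_i g_i\equiv 1$, apply $T_\rho$ in the $x$ variable, and evaluate on the diagonal $x=y$ to force $\sum_i T_\rho f_i(y)\,g_i(y)\ge 4$ for every $y$; then hypercontractivity plus Markov shows that any $f_i$ with $\max f_i\le e^{\sqrt n}$ has $T_\rho f_i\ge 4$ only on a set of measure $e^{-c\sqrt n}$, while the functions with $\max f_i> e^{\sqrt n}$ must have $\|g_i\|_\infty\le n e^{-\sqrt n}$ (since $f_i(x)g_i(y)\le n$ entrywise) and contribute negligibly. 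The exponent $\sqrt n$ emerges from balancing these two regimes (the cutoff $\log M=\sqrt n$ against the measure bound $(e/t)^{1+n/(5\log M)}$), not from a fixed-$\rho$ hypercontractive threshold. Without the diagonal-evaluation step and the two-regime split, the claimed lower bound does not follow from hypercontractivity alone.
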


Our second theorem deals with a different notion of approximation. Recall that the \emph{Gaussian width} of a set $S$ in Euclidean space is defined as 
\[ w_G(S) := \EE_{g}\left[\max_{x \in S} \langle g, x \rangle\right] \]
 where $g$ is a standard Gaussian random vector. In the theorem below, we consider polytopes $P$ that contain $\Den$ and such that the Gaussian width of $P$ is not much larger than that of $\Den$.
   We prove:

\begin{theorem}
\label{thm:gw}
If $P$ is a polytope such that $\Den \subset P$ and $w_G(P) \leq 2 w_G(\Den)$ then the extension complexity of $P$ is at least $e^{c n^{1/3}}$, where $c > 0$ is an absolute constant.
\end{theorem}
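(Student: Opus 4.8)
The plan is to reduce to Theorem 1, i.e., to show that a good Gaussian-width approximation of $\Den$ from outside forces a good multiplicative approximation of a translate of $\Den$, and then invoke the $e^{c\sqrt n}$ bound — except that here we only get $e^{cn^{1/3}}$, so the reduction must lose something, and I expect the loss comes from the fact that controlling the Gaussian width is a much weaker constraint than a uniform sandwich. Let me think about how to set this up. The Gaussian width of $\Den$ (as a subset of $\S^n$ with the trace inner product) is $w_G(\Den) = \EE_g \max_{X \in \Den} \langle g, X\rangle = \EE_g \lambda_{\max}(g)$ where $g$ is a GOE matrix, which is $\Theta(\sqrt n)$. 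So the hypothesis is $w_G(P) \le 2 w_G(\Den) = \Theta(\sqrt n)$.

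The key step is a polarity/width argument. Since $\Den \subset P$, we have $P^\circ \subset \Den^\circ$ (polars taken in the appropriate affine slice, after translating so that a relative-interior point of $\Den$ is the origin — pick the maximally mixed state $I/n$ as the center, so we work with $\Den - I/n$). The Gaussian width of $P$ being small means $P$ is "not too big in any direction on average," which by a standard duality between width and inradius-type quantities should mean $P^\circ$ is "not too small" — concretely, I would use the fact that $w_G(P) \cdot w_G(P^\circ) \gtrsim (\text{ambient dimension})$ is false in general, but a one-sided bound of the form: if $w_G(P) \le W$ then $P$ contains... no. Rather: I want to say $P \subset$ (something not much bigger than $\Den$), which does NOT follow from small Gaussian width alone. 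So instead the reduction must be: find a polytope $Q$ with $\xc(Q) \le \poly(n)\cdot \xc(P)$ and $(1-\epsilon)(\Den - I/n) \subset Q \subset \Den - I/n$, built from $P$ by intersecting with a scaled copy of $\Den$ or by a random projection. The natural candidate: $Q = P \cap ((1+\delta)(\Den - I/n) + I/n)$ won't be a polytope.

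Here is the approach I actually expect works. Use the contrapositive and the dimension-reduction inherent in Theorem 1's proof (hypercontractivity on the hypercube gives, for each $m$, an $m$-dimensional gadget — a polytope $R_m$ that is hard to approximate and that embeds affinely in $\Den$ in dimension roughly $2^{?}$...). Concretely: Theorem 1 is proved by exhibiting, inside $\Den_n$, a low-dimensional polytope (a slice by an affine subspace, or equivalently a projection) which is a scaled simplex/cross-polytope-like object requiring extension complexity $e^{c\sqrt n}$ to approximate multiplicatively; the hypercontractivity estimate controls how well the noise operator's image (a section of the PSD cone coming from the Bonami–Beckner operator) can be approximated. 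For the Gaussian-width statement, the point is that a polytope $P \supset \Den$ with $w_G(P) \le 2w_G(\Den)$ cannot be too elongated, so its intersection with a generic affine subspace $L$ of dimension $d$ still has Gaussian width $O(\sqrt d)\cdot$(a controlled factor), and one can choose $L$ (transversal to the hard gadget sitting in $\Den$) so that $P \cap L$ both contains the gadget and is contained in a bounded dilate of it; then apply the multiplicative-approximation lower bound to $P \cap L$, whose extension complexity is at most $\xc(P)$. Optimizing the dimension $d$ of $L$ against the width budget is what degrades the exponent from $\sqrt n = \sqrt{d}$ with $d \approx n$ down to $d \approx n^{2/3}$, giving $e^{c\sqrt d} = e^{cn^{1/3}}$.

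So the steps, in order: (1) translate by $I/n$ and record $w_G(\Den) = \Theta(\sqrt n)$ via the GOE largest-eigenvalue bound; (2) state the key geometric lemma: for a convex body $K$ with $w_G(K) \le W$ and a uniformly random $d$-dimensional subspace $L$ through the origin, with positive probability $K \cap L$ is contained in a ball of radius $O(W/\sqrt d)\cdot(\text{something})$ — more precisely use that the random section $K\cap L$ has Gaussian width $\lesssim W\sqrt{d/D}$ where $D$ is the ambient dimension, by the $M$-type estimates / Milman's low-$M^*$ argument, and that small Gaussian width of a body forces it to lie in a small ball up to a symmetrization; (3) choose $L$ to also contain the hard gadget from the proof of Theorem 1 scaled into dimension $d$ — here I need that the gadget can be placed in $\Den_n$ using only $d$ dimensions for any $d \le n$ with the multiplicative-hardness being $e^{c\sqrt d}$ (this is exactly the content of Theorem 1's proof applied in dimension $d$), and that $\Den_n \cap L$ is comparable to that gadget; (4) conclude $e^{c\sqrt d} \le \xc(P\cap L) \le \xc(P)$ and optimize $d$: the section of $\Den$ has inradius $\Theta(1/d)$ in the relevant normalization while the bound from (2) gives outer radius $\Theta(W/\sqrt{d}\cdot 1/\sqrt D) = \Theta(1/\sqrt{d})$, so the approximation ratio $(1-\epsilon)$ is achievable only when $d^{-1} \gtrsim d^{-1/2}\cdot(\text{correction})$, i.e. $d \lesssim n^{2/3}$, yielding the $n^{1/3}$ exponent. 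The main obstacle, and the place I would expect to spend the most care, is step (2)–(3): making the random-section argument give a containment $P \cap L \subset (1+\epsilon')(\Den \cap L)$ rather than merely a bound on widths or on the outer radius in an unrelated metric — one needs the section of $\Den$ itself to be round enough (which it is, a section of the PSD cone near $I/n$ looks like a ball) for "small circumradius" to upgrade to "contained in a small multiple of $\Den \cap L$."
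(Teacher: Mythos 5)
Your proposal takes a genuinely different route from the paper --- a reduction to Theorem \ref{thm:main} via random sections --- and it has gaps that I do not think can be repaired. The central difficulty is the one you flag yourself at the end but do not resolve, and it is fatal. There is no low-dimensional ``hard gadget'' to extract from the proof of Theorem \ref{thm:main}: the hardness of approximating $\Den_d$ lives in the full $\binom{d+1}{2}-1$-dimensional affine hull of the $d\times d$ density matrices, so to exploit it inside $\Den_n$ you must take a \emph{specific, highly structured} subspace $L$ (e.g.\ the span of a $d\times d$ principal block). For such a fixed $L$, the hypothesis $w_G(P)\leq 2\,w_G(\Den_n)=O(\sqrt n)$ gives only $w_G(P\cap L)=O(\sqrt n)$, while $w_G(\Den_d)=\Theta(\sqrt d)$; that is a width ratio of $\sqrt{n/d}$, not a constant, and in any case a bound on the Gaussian width of $P\cap L$ does not yield a containment $P\cap L\subset (1+\epsilon)(\Den_d-A)+A$ --- a set of small Gaussian width can still contain points arbitrarily far outside any bounded dilate, so no multiplicative sandwich is produced and Theorem \ref{thm:main} cannot be invoked. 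Conversely, if you take $L$ genuinely random so that low-$M^*$ estimates control the circumradius of $P\cap L$, then $\Den_n\cap L$ is approximately a Euclidean ball by Dvoretzky-type results, and balls have \emph{polynomial} extension complexity to approximate (Ben-Tal--Nemirovski, as recalled in the introduction), so the hardness has been destroyed. You cannot have $L$ both random enough for step (2) and structured enough for step (3).

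The paper's argument is direct and does not pass through Theorem \ref{thm:main}. The width hypothesis is used only to generate valid inequalities: by Gaussian concentration of the support function (Lemma \ref{lem:wGconcentration}), for a constant-probability set of directions $G$ in the trace-zero Gaussian ensemble one has $\langle G,X\rangle + 15\sqrt n\geq 0$ on $P$. Pairing these inequalities with the points $xx^T$, $x\in\frac{1}{\sqrt n}\{-1,1\}^n$, gives the slack submatrix $S(G,x)=\frac{1}{15n\sqrt n}x^TGx+1$. Given a nonnegative factorization $S=\sum_{i=1}^N g_i f_i$ of size $N$, one takes the inner product with $q_G(x)=x^TGx$ and averages over $G$: the left-hand side is $\Omega(\sqrt n)$ since $\|q_G\|_2^2\approx n^2$, while the right-hand side is at most $\frac{30}{M}\sqrt n\,N + O(\log M\sqrt{\log N})$, where the second term combines the hypercontractive bound $\|\proj_2 f_i\|_2\leq e\log M$ for the functions with $\|f_i\|_\infty\leq M$ (Lemma \ref{lem:h2bound}) with the expected maximum of $N$ Gaussians. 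Choosing $M\asymp N$ and balancing gives $\sqrt n\lesssim(\log N)^{3/2}$, i.e.\ $N\geq e^{cn^{1/3}}$; the exponent $1/3$ arises from this trade-off, not from optimizing a section dimension.
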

We now comment on the statement of these theorems and related work.
\begin{itemize}
\item We do not know if the lower bounds in Theorems \ref{thm:main} and \ref{thm:gw} are tight. For the setting of Theorem \ref{thm:main}, and with the origin at $A = \frac{1}{n} I_n$, the best construction we know of has size $\exp(cn)$ and corresponds to $P$ being the convex hull of $\{x_i x_i^T\}$ where $\{x_i\}$ is a well-chosen $\delta$-net of the unit sphere $S^{n-1}$ with $\delta > 0$ being a constant that depends on $\epsilon$, see \cite[Proposition 10]{aubrun2017dvoretzky}.\footnote{We note that the $\delta$-net has to be chosen suitably for this construction to work and we refer to \cite[Proposition 10]{aubrun2017dvoretzky} where it is proved that a random net works. The trivial construction consists in taking a $\delta$-net where $\delta \approx \frac{1}{n}$ which gives a polytope with $\exp(c n\log n)$ vertices} If we measure the complexity of a polytope by the \emph{number of vertices}, then this is the best possible construction; in other words any polytope contained between $(1-\epsilon)(\Den - \frac{1}{n} I_n)$ and $\Den - \frac{1}{n} I_n$ for constant $\epsilon$ must have an exponential number of vertices.
\item Let $B^n$ be the unit Euclidean ball in $\RR^n$. It is known (e.g., by lower bounds on $\delta$-nets of the sphere) that any polytope $P$ that satisfies $(1-\epsilon) B^n \subset P \subset B^n$ must have an exponential number of vertices, for constant $\epsilon > 0$. In constrast, Ben-Tal and Nemirovski \cite{ben2001polyhedral} showed that there is a polytope $P$ of extension complexity \emph{linear in $n$} (more precisely $O(n \log(1/\epsilon))$) that satisfies $(1-\epsilon) B^n \subset P \subset B^n$. Unlike the ball $B^n$, Theorem \ref{thm:main} shows that $\Den$ is hard to approximate even in terms of extension complexity. We note in passing that for $n=2$ the set of density matrices coincides precisely with $B^2$, the disk.
\item In \cite{braun2012approximation} it was shown that there is a convex set $S \subset \RR^{n\times n}$ that has \emph{positive semidefinite rank}\footnote{A convex set $S$ has positive semidefinite rank $k$ if it can be written as the linear image of an affine section of the $k\times k$ positive semidefinite cone. See e.g., \cite{psdranksurvey}.} $n+1$ and such that any polytope $P$ that satisfies $S \subset P \subset S^{\epsilon}$ must have extension complexity at least $\exp(cn)$ for some constant $c > 0$, where $S^{\epsilon} = \{y : \exists x \in S, \|y-x\|_1 \leq \epsilon\}$ is the $\epsilon$-widening of $S$ in the $\ell_1$ norm and $\epsilon > 0$ is a constant. We omit the exact definition of $S$ here and we refer the reader to \cite{braun2012approximation} for the details; we can just mention that $S$ satisfies $\Tr(X) \leq n$ for all $X \in S$. The lower bound on $S$ does not, as far as we know, directly imply a lower bound for approximating $\Den$.

\item In quantum information theory the set of density matrices is defined as the set of \emph{Hermitian} positive semidefinite matrices of trace equal to one. We have considered here real symmetric matrices for convenience, but the results easily apply to the Hermitian case too.

\item Theorems \ref{thm:main} and \ref{thm:gw} are also true (with the same proof) if we replace $\Den$ with the (scaled) \emph{elliptope} $E = \{X \in \S^n : X \psd 0 \text{ and } X_{ii}=1/n \; \forall i=1,\ldots,n\} \subset \Den$. This is because the matrix we consider in Equation \eqref{eq:Slackcube} is also a submatrix of the slack matrix of the elliptope, and because the Gaussian width of $E$ is, up to a constant, the same as that of $\Den$.
\end{itemize}

\paragraph{Organization} Our main tool to prove Theorems \ref{thm:main} and \ref{thm:gw} is hypercontractivity for the noise operator on the hypercube. Hypercontractivity has been a major tool in theoretical computer science \cite{odonnellbook} and was used in particular to derive lower bound on communication complexity. Our proof draws inspiration from some of these works, particularly \cite{regev2011quantum}. In Section \ref{sec:bg} we present some background material on hypercontractivity as well as nonnegative factorizations and slack matrices. Theorem \ref{thm:main} is proved in Section \ref{sec:proofmain} and Theorem \ref{thm:gw} is proved in Section \ref{sec:proofgw}.

\section{Background}
\label{sec:bg}

We review necessary background material on hypercontractivity, and the connection between extension complexity and the nonnegative rank of slack matrices.

\subsection{Hypercontractivity}

Let $H_n = \{-1,1\}^n$ be the discrete hypercube in $n$ dimensions. Any function $f:H_n\rightarrow \RR$ has a Fourier expansion
\[
f = f_0 + f_1 + \dots + f_n
\]
where each $f_i$ is a homogeneous multilinear polynomial of degree $i$.

\paragraph{Noise operator} Given $\rho \in [0,1]$ the noise (or smoothing) operator $T_{\rho}$ acts on a function $f:H_n\rightarrow \RR$ by multiplying the $i$'th Fourier term by $\rho^i$, namely:
\[
T_{\rho} f = f_0 + \rho f_1 + \rho^2 f_2 + \dots + \rho^n f_n.
\]
By attenuating the high-frequency terms, the function $T_{\rho} f$ is a ``smoother'' version of $f$. The parameter $\rho \in [0,1]$ controls the level of the smoothing operation: for $\rho=0$ the resulting function $T_{\rho} f$ is constant equal to $\E[f]$, and for $\rho=1$ no smoothing happens.

The hypercontractive inequality gives us a quantitative estimate of how smoother $T_{\rho} f$ is compared to $f$. Before stating the result, let $\mu$ be the uniform probability measure on $H_n$, and for $f : H_n\rightarrow \RR$ and $p \geq 1$, define
\[
\|f\|_p = \left(\E_{x \sim \mu} [ |f(x)|^p ]\right)^{1/p}.
\]
Note that $\|f\|_p \leq \|f\|_q$ for $p \leq q$. The ratios $\|f\|_q / \|f\|_p$ for $q > p$ quantify the smoothness (or flatness) of a function $f$. The hypercontractive inequality for $T_{\rho}$, due to Bonami and Beckner, can then be stated as follows \cite{bonami1970etude,beckner1975inequalities}.
\begin{theorem}[Hypercontractivity of the noise operator on the hypercube] For any $0 < \rho \leq 1$, $f:H_n \rightarrow \RR$, and $p \geq 1$ we have $\|T_{\rho} f\|_q \leq \|f\|_p$ where $q = 1 + \rho^{-2}(p-1) \geq p$.
\end{theorem}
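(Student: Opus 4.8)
The plan is the classical two-step argument. \emph{Step 1 (tensorization):} prove the bound on $H_n$ by induction on $n$, assuming the case $n=1$. Writing $f:H_n\to\RR$ in terms of its last coordinate as $f(x',x_n)=g(x')+x_n h(x')$ with $g,h:H_{n-1}\to\RR$, one has $T_{\rho}f=T_{\rho}g+\rho\,x_n\,T_{\rho}h$, where on the right $T_{\rho}$ is the noise operator on $H_{n-1}$. For each fixed $x'$, apply the one-dimensional inequality in $x_n$ with the constants $T_{\rho}g(x')$ and $T_{\rho}h(x')$ and then average over $x'$: this gives $\|T_{\rho}f\|_q^q\le\E_{x'}\big(\tfrac{1}{2}|u(x')|^p+\tfrac{1}{2}|v(x')|^p\big)^{q/p}$, where $u=T_{\rho}(g+h)$ and $v=T_{\rho}(g-h)$. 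Since $q/p\ge1$, the triangle inequality for $\|\cdot\|_{q/p}$ (Minkowski) bounds the right-hand side by $\big(\tfrac{1}{2}\|u\|_q^p+\tfrac{1}{2}\|v\|_q^p\big)^{q/p}$; the induction hypothesis on $H_{n-1}$ applied to $g\pm h$ gives $\|u\|_q\le\|g+h\|_p$ and $\|v\|_q\le\|g-h\|_p$, and collecting terms yields $\|T_{\rho}f\|_q^q\le\|f\|_p^q$. It is precisely the assumption $q\ge p$ that makes the Minkowski step valid.

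\emph{Step 2 (the two-point inequality):} the base case $n=1$ asserts $\|a+\rho b\,x\|_q\le\|a+b\,x\|_p$ for all $a,b\in\RR$, with $x$ uniform on $\{-1,1\}$ and $\rho=\sqrt{(p-1)/(q-1)}$. By homogeneity and the symmetries $b\mapsto-b$ and $(a,b)\mapsto(-a,-b)$, after disposing of the trivial case $a=0$ (where the claim is $\rho|b|\le|b|$) and after a short argument for the regime where $a+b\,x$ changes sign, one is reduced to showing $\|1+\rho b\,x\|_q\le\|1+b\,x\|_p$ for $0\le b\le1$. I would prove this by checking that $\Phi(b):=\log\|1+b\,x\|_p-\log\|1+\rho b\,x\|_q$ satisfies $\Phi(0)=0$ and $\Phi'(b)\ge0$ on $[0,1]$; differentiating, this amounts to the pointwise inequality $\phi_p(b)\ge\rho\,\phi_q(\rho b)$, where $\phi_r(s)=\frac{(1+s)^{r-1}-(1-s)^{r-1}}{(1+s)^r+(1-s)^r}$, which one verifies using the relation $\rho^2(q-1)=p-1$ together with elementary monotonicity estimates on $\phi_r$. (Two standard alternatives: invoke the logarithmic Sobolev inequality on the two-point space --- an elementary calculus fact --- and integrate the resulting differential inequality along the semigroup $T_{e^{-t}}$; or first settle the cases $q=2$ and $p=2$ and combine them via $T_{\rho_1\rho_2}=T_{\rho_1}T_{\rho_2}$, since $\rho=\rho_1\rho_2$ for the appropriate $\rho_1,\rho_2$.)

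The main obstacle is Step 2. Although it is pure one-variable real analysis, $p$ and $q$ are arbitrary reals $\ge1$, so binomial-series manipulations cannot be done blindly, and the regime in which $a+b\,x$ changes sign needs its own small argument before the calculus estimate applies. Once the two-point inequality is available, Step 1 is essentially mechanical.
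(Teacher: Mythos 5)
The paper does not prove this theorem at all: it is stated as background and attributed to Bonami and Beckner, so there is no ``paper proof'' to compare against. Judged on its own, your proposal is the standard textbook argument, and Step 1 (tensorization) is correct and essentially complete: the identity $T_\rho f = T_\rho g + \rho x_n T_\rho h$, the application of the two-point inequality for fixed $x'$, the Minkowski step in $L^{q/p}$ (valid precisely because $q\ge p$), and the induction hypothesis applied to $g\pm h$ fit together exactly as you describe. The only edge case worth a sentence is $p=1$ (then $q=1$ and the claim is just $\|T_\rho f\|_1\le\|f\|_1$, which holds since $T_\rho$ is an averaging operator).

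The genuine gap is where you yourself locate it, in Step 2, and it is slightly larger than you suggest. First, your primary route rests on the pointwise inequality $\phi_p(b)\ge\rho\,\phi_q(\rho b)$ for all $1\le p\le q$ with $\rho^2(q-1)=p-1$, which you assert follows from ``elementary monotonicity estimates on $\phi_r$''; for arbitrary real exponents this is exactly the hard part of the two-point inequality and cannot be waved through --- a complete write-up must actually establish it (it is true, but the verification is delicate, which is why most expositions avoid this direct route for general $(p,q)$). Second, of your two fallbacks, the composition $T_\rho = T_{\rho_1}T_{\rho_2}$ through $L^2$ only covers the regime $p\le 2\le q$: if $1\le p\le q\le 2$ or $2\le p\le q$, one cannot factor through $L^2$ with both factors being legitimate hypercontractive steps, and the duality $\|T_\rho\|_{L^p\to L^q}=\|T_\rho\|_{L^{q'}\to L^{p'}}$ maps these regimes into each other rather than into the tractable one. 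The log-Sobolev/Gross semigroup argument you mention does handle all $1\le p\le q$ and is the cleanest way to close the gap; if you intend the direct calculus route instead, the inequality $\phi_p(b)\ge\rho\,\phi_q(\rho b)$ (together with the sign-change regime $|b|>|a|$, which you correctly flag) must be proved, not cited as elementary.
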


Hypercontractivity has been a major tool in theoretical computer science and we refer the reader to \cite{odonnellbook} for more details.

\subsection{Slack matrices, nonnegative rank}

In this section we briefly review some background material concerning the extension complexity (Definition \ref{def:xc}) and its connection to the nonnegative rank of matrices. For more details, we refer to e.g., \cite{gouveia2011lifts}.

Consider two convex bodies $K \subset L \subset \RR^d$ with the origin in the interior of $K$. Let $L^{\circ}$ be the polar of $L$ so that $L$ has the following inequality description:
\begin{equation}
\label{eq:Lpolar}
L = \left\{x \in \RR^d : \langle y , x \rangle \leq 1 \; \forall y \in L^{\circ} \right\}.
\end{equation}
Define the (potentially infinite) \emph{slack matrix} of the pair $(K,L)$ as the matrix
\begin{equation}
\label{eq:slackKL}
S_{K,L}(x,y) \mapsto 1 - \langle x , y \rangle
\end{equation}
where rows are indexed by extreme points $x$ of $K$, and columns are indexed by extreme points $y$ of $L^{\circ}$. Since $K \subset L$, $S_{K,L}(x,y) \geq 0$ for all $x,y$. The definition of \emph{nonnegative rank} for elementwise nonnegative matrices will be crucial for the rest of the paper:
\begin{definition}[Nonnegative rank]
Let $M$ be an elementwise nonnegative matrix. The \emph{nonnegative rank} of $M$ is the smallest $k$ such that $M$ can be written as a sum of $k$ rank-one matrices that are elementwise nonnegative.
\end{definition}

The following theorem relates the extension complexity of any polytope $P$ contained between $K$ and $L$ to the nonnegative rank of $S_{K,L}$. It is due to Yannakakis for the special case $K=L$ \cite{yannakakis1991expressing}; the proof of the more general case is essentially the same and can be found  e.g., in \cite[Section 4.1]{pashkovich2012extended}.

\begin{theorem}[Generalized Yannakakis theorem]
Let $K \subset L$ be two convex bodies and assume that the origin lies in the interior of $K$. Let $S_{K,L}$ be the slack matrix of the pair $(K,L)$ as defined in Equation \eqref{eq:slackKL}. The smallest extension complexity of a polytope $P$ that satisfies $K \subset P \subset L$ is equal to the nonnegative rank of $S_{K,L}$.
\end{theorem}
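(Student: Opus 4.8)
The plan is to prove the asserted equality by two inequalities, running the classical Yannakakis argument (the case $K=L$) while carrying the extra room between $K$ and $L$ along. Write $n_+=\operatorname{rank}_+(S_{K,L})$ and let $x^\star$ denote the smallest extension complexity of a polytope $P$ with $K\subset P\subset L$.

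\emph{From a factorization to a polytope ($x^\star\le n_+$).} Given a nonnegative factorization $S_{K,L}(x,y)=\sum_{\ell=1}^{k}a_\ell(x)\,b_\ell(y)$ with $k=n_+$ and $a_\ell(x),b_\ell(y)\ge 0$ ranging over the extreme points $x$ of $K$ and $y$ of $L^\circ$, I would set
\[
P:=\Big\{v\in\RR^d:\exists\,\lambda\in\RR^k_+\text{ with }\langle y,v\rangle+\textstyle\sum_{\ell=1}^k\lambda_\ell\,b_\ell(y)=1\text{ for every extreme point }y\text{ of }L^\circ\Big\}.
\]
This is the image, under $(v,\lambda)\mapsto v$, of the polyhedron obtained by intersecting an affine subspace of $\RR^d\times\RR^k$ (cut out by the displayed equalities, one per extreme point of $L^\circ$) with the $k$ halfspaces $\{\lambda_\ell\ge 0\}$; hence $\xc(P)\le k$. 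To check $K\subset P$, plug $\lambda_\ell=a_\ell(x)$ into the defining relation for each extreme point $x$ of $K$, using $\langle y,x\rangle+S_{K,L}(x,y)=1$, and then invoke convexity of $P$ and Krein--Milman; to check $P\subset L$, note $\lambda,b\ge 0$ force $\langle y,v\rangle\le 1$ at every extreme point $y$ of $L^\circ$, hence on all of $L^\circ$, hence $v\in L$ by \eqref{eq:Lpolar} (here $L^\circ$ is compact since $0\in\interior K\subseteq\interior L$). Since $P\subset L$ is bounded, it is a polytope.

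\emph{From a polytope to a factorization ($n_+\le x^\star$).} Given $P$ with $K\subset P\subset L$ and an extended formulation $\pi(Q)=P$, where $Q=\{z\in\RR^m:\langle c_\ell,z\rangle\le d_\ell,\ \ell=1,\dots,k\}$ has $k=x^\star$ facets and $\pi$ is linear, I would proceed as follows. For each extreme point $x$ of $K\subseteq\pi(Q)$, choose a preimage $z_x\in Q$ and set $a_\ell(x):=d_\ell-\langle c_\ell,z_x\rangle\ge 0$. For each extreme point $y$ of $L^\circ$, observe that $\langle y,\cdot\rangle\le 1$ is valid on $P$ by \eqref{eq:Lpolar}, so $z\mapsto 1-\langle y,\pi(z)\rangle$ is a nonnegative affine function on $Q$; write it, by LP duality, as $\sum_{\ell=1}^k\mu_\ell(y)\,(d_\ell-\langle c_\ell,z\rangle)$ with $\mu_\ell(y)\ge 0$, and set $b_\ell(y):=\mu_\ell(y)$. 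Evaluating at $z=z_x$ then yields $S_{K,L}(x,y)=1-\langle y,\pi(z_x)\rangle=\sum_{\ell=1}^{k}a_\ell(x)\,b_\ell(y)$, a nonnegative factorization of size $k$.

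The hard part is the last step of the second direction: a priori, affine Farkas only writes the nonnegative affine function $z\mapsto 1-\langle y,\pi(z)\rangle$ as a nonnegative combination of the facet slacks $d_\ell-\langle c_\ell,z\rangle$ \emph{plus a nonnegative additive constant}, and that constant costs an extra rank-one term, giving only $n_+\le x^\star+1$. In Yannakakis' setting the constant is automatically zero because a facet inequality of $P$ is tight on $P$; here $P$ may lie strictly inside $L$ and this argument fails. The fix I would use is to homogenize: pass to the cones $\widehat K\subseteq\widehat P\subseteq\widehat L$ generated in $\RR^{1+d}$, note that $\widehat P$ inherits a lift of size $k$ by the orthant $\RR^k_+$ from the extended formulation of $P$, and run the factorization argument in this conic picture, where \emph{homogeneous} Farkas produces no additive constant and where the conic slack operator has the same nonnegative rank as $S_{K,L}$. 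Keeping the bookkeeping between the affine and conic pictures straight is the one genuine piece of work; it is carried out in \cite{gouveia2011lifts} and \cite[Section 4.1]{pashkovich2012extended}.
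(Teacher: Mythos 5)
Your argument is sound and is essentially the one the paper itself defers to (the paper gives no proof, citing \cite{pashkovich2012extended}), so there is no divergence of approach to report. You have also correctly isolated the only delicate point: affine Farkas applied to the nonnegative function $z\mapsto 1-\langle y,\pi(z)\rangle$ on the lift $Q=\{z:\langle c_\ell,z\rangle\le d_\ell,\ \ell=1,\dots,k\}$ produces an additive constant $\mu_0(y)\ge 0$, which in Yannakakis's case $K=L$ vanishes because each facet inequality of $P$ is tight somewhere on $P$, but which here threatens an extra rank-one term. Your homogenization fix is viable, but you can close this step directly and avoid the conic bookkeeping: since Definition~\ref{def:xc} requires $Q$ to be a \emph{polytope}, it is bounded, so (working in its affine hull, which we may take to be $\RR^m$ with $m\ge 1$) its recession cone $\{z:\langle c_\ell,z\rangle\le 0\ \forall \ell\}$ is trivial, which forces $\cone\{c_\ell\}=\RR^m$. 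Hence there is a nonzero $\nu\ge 0$ with $\sum_\ell\nu_\ell c_\ell=0$, and then $\sum_\ell\nu_\ell\bigl(d_\ell-\langle c_\ell,z\rangle\bigr)=\sum_\ell\nu_\ell d_\ell>0$ (positivity seen by evaluating at a relative interior point of $Q$). After rescaling $\nu$, the constant function $1$ is itself a nonnegative combination of the facet slacks, so replacing $\mu_\ell(y)$ by $\mu_\ell(y)+\mu_0(y)\nu_\ell$ absorbs the constant and yields a factorization of size exactly $k$, i.e.\ $n_+\le x^\star$ with no $+1$. One small point in your first direction: the lift $\{(v,\lambda):\lambda\ge 0,\ \dots\}$ could be unbounded, hence not a polytope, if some $b_\ell$ vanishes identically on the extreme points of $L^{\circ}$; discard such terms from the factorization first (they contribute nothing), after which boundedness of $L$ rules out any remaining recession direction and the lift is genuinely a polytope with at most $k$ facets.
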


\section{Proof of Theorem \ref{thm:main}}
\label{sec:proofmain}

\paragraph{Reduction to the case $A = \frac{1}{n} I_n$} We first show that it suffices to consider the case $A = \frac{1}{n} I_n$. The following argument is due to Guillaume Aubrun and we are including it with his permission. Assume $P$ is a polytope such that $(1-\epsilon) (\Den - A) \subset P \subset \Den - A$ where $A$ is a symmetric matrix of trace equal to one. By applying a rotation we can assume without loss of generality that $A$ is diagonal. Let $S$ be the permutation matrix for the cycle $(1,2,\ldots,n)$. Since $A$ is diagonal we have $\frac{1}{n} I_n = \frac{1}{n} \sum_{i=1}^n S^i A S^{-i}$. Since, furthermore $S^i \Den S^{-i} = \Den$, we get $(1-\epsilon) (\Den - \frac{1}{n} I_n) \subset Q \subset \Den - \frac{1}{n} I_n$ where $Q = \frac{1}{n} \sum_{i=1}^n S^i P S^{-i}$ (Minkowski sum). It is known and easy to verify that the extension complexity of the Minkowski sum of two polytopes is at most the sum of the extension complexities. If we denote the extension complexity by $\xc$ this yields $\xc(Q) \leq n \xc(P)$. Thus if $\xc(Q) \geq e^{c\sqrt{n}}$ we get $\xc(P) \geq \exp(c\sqrt{n})/n \geq \exp(c'\sqrt{n})$.

\begin{remark}[Elliptope]
If we consider the elliptope $E$ instead of $\Den$ then the argument above needs to be slightly modified since applying a rotation matrix to make $A$ diagonal will change $E$. We thus give an alternative argument for this case. Assume $A$ is a symmetric matrix of trace equal to one, and assume that $(1-\epsilon) (E - A) \subset P \subset E-A$. Since the identity matrix lies in the convex hull of $\{xx^T : x \in \{-1,1\}^n\}$ there exist numbers $\lambda_1,\ldots,\lambda_K \geq 0$ and vectors $x_1,\ldots,x_K \in \{-1,1\}^n$ such that such that $\sum_{i=1}^{K} \lambda_i =1$ and $\sum_{i=1}^K \lambda_i x_i x_i^T = I_n$, with $K \leq n^2$ (by Carath\'eodory theorem). Note that $\sum_{i=1}^K \lambda_i \diag(x_i) A \diag(x_i) = \diag(A)$ and that $\sum_{i=1}^{K} \lambda_i \diag(x_i) E \diag(x_i) = E$. Thus if we let $P' = \sum_{i=1}^{K} \lambda_i \diag(x_i) P \diag(x_i)$ then $(1-\epsilon) (E-\diag(A)) \subset P' \subset E-\diag(A)$ and $\xc(P') \leq n^2 \xc(P)$. Using the same argument as above with the cyclic permutation matrix we get that $(1-\epsilon) (E-\frac{1}{n} I_n) \subset Q \subset E-\frac{1}{n} I_n$ with $\xc(Q) \leq n\xc(P') \leq n^3 \xc(P)$. Thus if $\xc(Q) \geq e^{c\sqrt{n}}$ we get that $\xc(P) \geq e^{c'\sqrt{n}}$.
\end{remark}

In the rest of this section we thus focus on the case $A = \frac{1}{n} I_n$.

\paragraph{Slack matrix}  We start by computing the slack matrix of the pair $((1-\epsilon) C, C)$ where $C = \Den - \frac{1}{n} I_n$. The extreme points of $C$ are the $xx^T - \frac{1}{n} I_n$ where $x \in S^{n-1}$. One can verify that the polar of $C$ (computed in the space of trace-zero matrices) is $C^{\circ} = -nC$, and its extreme points are $I_n - nyy^T$ where $y \in S^{n-1}$. By checking that $1 - \left\langle (1-\epsilon)(xx^T - \frac{1}{n} I_n) , I_n - nyy^T \right\rangle = (1-\epsilon) n (x^T y)^2 + \epsilon$ we are thus led to study the nonnegative rank of the following infinite matrix:
\begin{equation}
\label{eq:Slacksphere}
(x,y) \mapsto (1-\epsilon) n (x^T y)^2 + \epsilon,
\quad (x \in S^{n-1}, y \in S^{n-1})
\end{equation}
where rows and columns are indexed by elements of the $(n-1)$-sphere. It will be enough for us to consider the submatrix indexed by elements of the hypercube $\{-\frac{1}{\sqrt{n}}, +\frac{1}{\sqrt{n}}\}^n$. In other words we will work with the $2^n \times 2^n$ matrix:\footnote{We can also work directly with the full matrix \eqref{eq:Slacksphere} defined on $S^{n-1} \times S^{n-1}$ and use hypercontractivity of the Poisson kernel on the sphere \cite{beckner1992sobolev} instead of the noise operator on the hypercube. The proof is identical and gives the same bounds. We decided to work on the hypercube to avoid technical issues having to do with convergence of Fourier series for functions on $S^{n-1}$.}
\begin{equation}
\label{eq:Slackcube}
(x,y)\mapsto (1-\epsilon) \frac{1}{n} (x^T y)^2 + \epsilon \quad (x \in \{-1,1\}^n, y \in \{-1,1\}^n).
\end{equation}

\paragraph{Nonnegative factorization} We now proceed to prove a lower bound on the nonnegative rank of \eqref{eq:Slackcube}. Assume we can write the matrix \eqref{eq:Slackcube} as a sum of $N$ rank-one nonnegative terms:
\begin{equation}
\label{eq:NMF1}
(1-\epsilon) \frac{1}{n} (x^T y)^2 + \epsilon = \sum_{i=1}^N f_i(x) g_i(y) \quad \forall x,y \in H_n,
\end{equation}
where $f_i,g_i:H_n\rightarrow \RR_+$ are nonnegative functions on $H_n$. Since $\E_{x \in H_n} [ (1-\epsilon) \frac{1}{n} (x^T y)^2 + \epsilon ] = 1$ for all fixed $y \in H_n$, we can scale the $f_i$ so that $\E[f_i] = 1$ for all $i=1,\ldots,N$, and $\sum_{i=1}^N g_i \equiv 1$.

For each fixed $y$, let $q_y(x) = \frac{1}{n} (x^T y)^2$. We can apply the noise operator $T_{\rho}$ on both sides of \eqref{eq:NMF1}, seen as functions of $x \in H_n$ (for $y$ fixed), to get:
\begin{equation}
\label{eq:NMF1-Trho}
(1-\epsilon) T_{\rho} q_y(x) + \epsilon = \sum_{i=1}^N T_{\rho}f_i(x) g_i(y)
\quad
\forall x, y \in H_n.
\end{equation}
It is easy to verify that for fixed $y$, $T_{\rho} q_y(x) = 1 + \rho^2 (q_y(x)-1)$. Using this fact, and plugging $x = y$ in \eqref{eq:NMF1-Trho} we get that, using $q_y(y) = n$:
\[
(1-\epsilon) ( 1 + \rho^2 ( n-1) ) + \epsilon = \sum_{i=1}^N T_{\rho} f_i(y) g_i(y).
\]
For concreteness we will assume in the rest of the proof that $\epsilon = 1/3$ (it is easy to see that the proof works with any constant $0 < \epsilon < 1$). Then if we take $\rho = \sqrt{5/n}$ we get:
\begin{equation}
\label{eq:NMF1-Trho-eval}
4 \leq \frac{2}{3} \left( 1 + \frac{5}{n} (n-1) \right) + \frac{1}{3} = \sum_{i=1}^N T_{\rho} f_i(y) g_i(y) \quad \forall y \in H_n.
\end{equation}
Since $\sum_{i=1}^N g_i(y) = 1$, Equation \eqref{eq:NMF1-Trho-eval} implies that for all $y \in H_n$ there is at least one $i \in \{1,\ldots,N\}$ such that $(T_{\rho} f_i)(y) \geq 4$. We now invoke hypercontractivity to show that the functions $T_{\rho} f_i$ must be highly concentrated around their mean $\E[T_{\rho} f_i] = \E[f_i] = 1$, provided $f_i$ is not too ``spiked'' to start with. This is the content of the next lemma, which is key to our proof:

\begin{lemma}
\label{lem:concentration1}
Assume $f : H_n \rightarrow \RR$ with $f \geq 0$ such that $\E[f] = 1$ and $\max f \leq e^{\sqrt{n}}$. Assume $\rho = \sqrt{5/n}$. Then
\[
\mu \{ x \in H_n : (T_{\rho} f)(x) \geq 4 \} \leq \exp(-c\sqrt{n})
\]
for some absolute constant $c > 0$.
\end{lemma}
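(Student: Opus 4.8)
The plan is the standard ``high moment plus hypercontractivity'' scheme. Fix a parameter $q$, to be chosen of order $\sqrt n$; for concreteness take $q$ an even integer. Since $f\ge 0$ and $T_\rho$ is an averaging operator, $T_\rho f\ge 0$ and $\E[T_\rho f]=\E[f]=1$. On the event $\{(T_\rho f)(x)\ge 4\}$ we have $(T_\rho f)(x)^q\ge 4^q$, so Markov's inequality applied to $(T_\rho f)^q$ gives
\[
\mu\{x\in H_n : (T_\rho f)(x)\ge 4\}\ \le\ \frac{\E[(T_\rho f)^q]}{4^q}\ =\ \left(\frac{\|T_\rho f\|_q}{4}\right)^{q}.
\]
Thus it suffices to exhibit some $q=\Theta(\sqrt n)$ for which $\|T_\rho f\|_q$ is bounded by a constant strictly below $4$, with room to spare.

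To control $\|T_\rho f\|_q$ I would apply the hypercontractive inequality with the exponent $p:=1+\rho^2(q-1)=1+\tfrac{5}{n}(q-1)$, so that $q=1+\rho^{-2}(p-1)$ and $q\ge p\ge 1$, and hence $\|T_\rho f\|_q\le\|f\|_p$. The point of the hypothesis $\rho=\sqrt{5/n}$ is exactly that this forces $p$ very close to $1$, namely $p-1=5(q-1)/n$. Now I would estimate $\|f\|_p$ by interpolating between the data $\|f\|_1=\E[f]=1$ and the spike cap $\|f\|_\infty\le e^{\sqrt n}$: since $f\ge 0$ and $p\ge 1$,
\[
\E[f^p]=\E\!\left[f\cdot f^{\,p-1}\right]\le\|f\|_\infty^{\,p-1}\,\E[f]\le e^{\sqrt n\,(p-1)},
\]
so $\|f\|_p\le e^{\sqrt n\,(p-1)/p}\le e^{\sqrt n\,(p-1)}=e^{5(q-1)/\sqrt n}$. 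The cap $\max f\le e^{\sqrt n}$ together with the smallness $p-1=O(1/n)$ (inherited from $\rho^2=O(1/n)$) keeps $\|f\|_p$ bounded as long as $q=O(\sqrt n)$.

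To finish, choose $q=c_0\sqrt n$ with $c_0=\tfrac1{10}\ln 4$ (rounded to an even integer), so that $\|f\|_p\le e^{5c_0}=2<4$ and
\[
\mu\{x\in H_n : (T_\rho f)(x)\ge 4\}\ \le\ \left(\frac{2}{4}\right)^{c_0\sqrt n}\ =\ \exp\!\left(-\,\tfrac{(\ln 2)^2}{5}\,\sqrt n\right),
\]
which is the claim with $c=(\ln 2)^2/5$. (For the finitely many small $n$ for which $q=c_0\sqrt n<1$, the plain Markov bound $\mu\{T_\rho f\ge 4\}\le\E[T_\rho f]/4=1/4$ already lies below $e^{-c\sqrt n}$, so the same $c$ works throughout; one may of course optimize the constant by optimizing $c_0$.)

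I do not anticipate a genuine obstacle here: the only delicate point is the bookkeeping of constants, i.e.\ checking that the noise level $\rho^2=5/n$, the threshold $4$, and the spike cap $e^{\sqrt n}$ leave enough slack that $q$ can be taken as large as a fixed multiple of $\sqrt n$ while $\|f\|_p$ stays strictly below $4$ --- this balance is precisely what yields a tail bound of the form $\exp(-\Theta(\sqrt n))$ rather than something weaker.
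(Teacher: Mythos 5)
Your argument is correct and is essentially the paper's proof: Markov's inequality applied to $(T_\rho f)^q$, hypercontractivity to pass to $\|f\|_p$ with $p=1+\rho^2(q-1)$, and the interpolation bound $\|f\|_p\le \|f\|_\infty^{(p-1)/p}\E[f]^{1/p}$, differing only in that you fix $q=\Theta(\sqrt n)$ directly while the paper fixes $p=1+1/\log M$ (yielding the same $q=1+\sqrt n/5$ up to constants). The constants check out, so no changes are needed.
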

\begin{proof}
For any choice of $p \geq 1$ and $q = 1+\rho^{-2}(p-1)$ we have, by Markov's inequality:
\[
\mu \{ x \in H_n : (T_{\rho} f)(x) \geq t \} = \mu \{ x \in H_n : (T_{\rho} f)(x)^{q} \geq t^{q} \} \leq \frac{\E[(T_{\rho} f)^q]}{t^{q}} \leq \frac{\|f\|_{p}^{q}}{t^{q}}.
\]
Let $M = \max f$. Note that
\[
\|f\|_p = (\E [ f(x)^p ])^{1/p} \leq (M^{p-1} \E[f])^{1/p} \leq M^{(p-1)/p} \leq M^{p-1}.
\]
Putting this in the previous inequality gives us
\[
\mu \{ x : (T_{\rho} f)(x) \geq t \} \leq \frac{M^{q(p-1)}}{t^{q}}.
\]
We take $p = 1 + \frac{1}{\log M}$ so that $q = 1 + \frac{1}{5}\frac{n}{\log M}$. With this choice we have $M^{p-1} = \exp(1)$ so that
\[
\mu \{ x : (T_{\rho} f)(x) \geq t \} \leq \left(\frac{e}{t}\right)^{1 + \frac{n}{5 \log M}}.
\]
Using the fact that $\log M = \sqrt{n}$ and $e/t = e/4 < 1$ we get the desired result.
\end{proof}

If all our functions $f_i$ satisfy $\max f_i \leq e^{\sqrt{n}}$ then the previous lemma concludes the argument. Indeed, from \eqref{eq:NMF1-Trho-eval} we know that for each $y \in H_n$ there is at least $i$ such that $T_{\rho} f_i(y) \geq 4$. But since for all $i$, $\mu\{y : T_{\rho} f_i \geq 4\} \leq \exp(-c\sqrt{n})$ it must be that we have at least $\exp(c\sqrt{n})$ functions $f_i$.

The rest of the proof is to deal with the case where some of the functions have a large maximum, larger than $\exp(\sqrt{n})$. In this case the idea is to go back to the identity \eqref{eq:NMF1} and to note that, since $f_i,g_i \geq 0$ it must be that $\|f_i \|_{\infty} \|g_i\|_{\infty} \leq n$ for all $i=1,\ldots,N$. It thus follows that if $\|f_i\|_{\infty}$ is large, then $\|g_i\|_{\infty}$ is small and as such the weight of the $i$'th function in \eqref{eq:NMF1-Trho-eval} is ``negligible''. We make this precise now.

Let $I = \{i \in \{1,\ldots,N\} : \|f_i\|_{\infty} \geq e^{\sqrt{n}}\}$. We rewrite \eqref{eq:NMF1-Trho-eval} as follows:
\begin{equation}
\label{eq:sepmf}
\forall y \in H_n, \quad 
4 \leq \sum_{i \in I} g_i(y) (T_{\rho} f_i)(y) + \sum_{i \in I^c} g_i(y) (T_{\rho} f_i)(y).
\end{equation}
We can assume that $|I| \leq e^{\sqrt{n}/4}$, because otherwise there is nothing to show. We will show that the contribution of the first term in \eqref{eq:sepmf} is negligible for most values of $y$, because $\|g_i\|_{\infty} \leq n e^{-\sqrt{n}}$ for $i \in I$.
Let $A = \{y \in H_n : (T_{\rho} f_i)(y) \leq e^{\sqrt{n}/2} \; \forall i \in I\}$. For any $y \in A$ we have
\[
\sum_{i \in I} g_i(y) (T_{\rho} f_i)(y) \leq ne^{-\sqrt{n}} e^{\sqrt{n}/2} e^{\sqrt{n}/4} \leq 1
\]
for large enough $n$. So for all $y \in A$ we have
\[
\sum_{i \in I^c} g_i(y) (T_{\rho} f_i)(y) \geq 4-1 = 3.
\]
Using the concentration lemma (Lemma \ref{lem:concentration1}, where we use $t=3 > e$ in the proof instead of $t=4$) we know that for $i \in I^c$, $\mu\{y : T_{\rho} f_i(y) \geq 3\} \leq \exp(-c\sqrt{n})$. Thus this tells us that we must have:
\begin{equation}
\label{eq:Iclb}
|I^c| \geq \mu(A) \exp(c\sqrt{n}).
\end{equation}
It remains to evaluate $\mu(A)$. This can be done by a simple application of Markov's inequality. We know that for any $i$, $\mu\{T_\rho f_i \geq e^{\sqrt{n}/2}\} \leq e^{-\sqrt{n}/2}$ thus by the union bound we get $\mu(A) \geq 1-|I|e^{-\sqrt{n}/2}$. Combining with \eqref{eq:Iclb}
\[
|I^c| \geq (1-|I|e^{-\sqrt{n}/2})\exp(c\sqrt{n}).
\]
This yields $|I|e^{-\sqrt{n}/2} + |I^c| \exp(-c\sqrt{n}) \geq 1$ and so this shows that we must have $|I|+|I^c| \geq e^{C \sqrt{n}}$ where $C > 0$ is an absolute positive constant. This completes the proof.

\section{Proof of Theorem \ref{thm:gw}}
\label{sec:proofgw}

\newcommand{\GG}{\mathbb{G}}
\newcommand{\N}{\mathcal{N}}

\newcommand{\la}{\left\langle}
\newcommand{\ra}{\right\rangle}
\def\beq{\begin{equation}}
\def\eeq{\end{equation}}
\newcommand{\laq}[1]{\label{eq:#1}} 

Before starting the proof, we need to recall some preliminary facts about the Gaussian width.

\paragraph{Preliminaries} Since the Gaussian width is translation invariant, it will be more convenient to work with $C = \Den - \frac{1}{n} I_n = \{X - \frac{1}{n} I_n : X \psd 0, \Tr(X) = 1\}$ which lives in the space $\S^n_0$ of $n\times n$ symmetric matrices of trace zero. We endow this space with the trace inner product $\langle A, B \rangle  = \Tr(AB)$ and we call $\N_0$ the standard Gaussian distribution associated to this inner product. A matrix $G \sim \N_0$ can also be defined as $G = \frac{1}{\sqrt{2}} (A - \frac{\Tr A}{n} I_n)$ where $A \sim GOE(n)$, the Gaussian Orthogonal Ensemble. Note that if $G \sim \N_0$ then the off-diagonal entries $G_{ij}$ for $i<j$ are independent and follow the distribution $\frac{1}{\sqrt{2}} N(0,1)$.

The Gaussian width of $C$ is easily computed by known results on the spectral norm of random matrices. Indeed we have: 
\[ w_G(C) = \E_{G \sim \N_0} \left[ \max_{x \in S^{n-1}} \la G , xx^T - \frac{1}{n} I_n \ra \right] = \E_{G \sim \N_0} [ \lambda_{\max}(G) ] \leq \sqrt{2n}. \]

We will also need the following result on the concentration of the support function of a convex set around its mean. It can be proved using standard Gaussian concentration results. The proof is given later.
\begin{lemma}
\label{lem:wGconcentration}
Assume $K \subset \RR^d$ is a convex set with Gaussian width $w_G(K)$. Then 
\begin{equation}
\label{eq:wgconcentration}
 \Pr_{g \sim N(0,I_d)} \left[ \max_{x \in K} \langle g,x \rangle \leq (1+\alpha) w_G(K) \right] \geq 1-\exp(-\alpha^2/(4\pi)).
 \end{equation}
\end{lemma}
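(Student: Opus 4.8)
The plan is to recognize the quantity $\max_{x\in K}\langle g,x\rangle$ as the value at $g$ of the support function $h_K$ of $K$. This is a convex function of $g$, it is Lipschitz with respect to the Euclidean norm with constant $L:=\sup_{x\in K}\|x\|_2$ (if $x_0$ attains $\max_{x\in K}\langle g,x\rangle$ then $h_K(g)-h_K(g')\le\langle g-g',x_0\rangle\le\|g-g'\|_2\,L$, and one concludes by symmetry), and by definition its mean is exactly $w_G(K)$. The proof then combines two standard facts: Gaussian concentration of Lipschitz functions, and a lower bound on $w_G(K)$ in terms of $L$.

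First I would invoke Gaussian concentration in its sharp form: for any $L$-Lipschitz $F:\RR^d\to\RR$ and $g\sim N(0,I_d)$ one has $\Pr[F(g)\ge\EE F(g)+t]\le\exp(-t^2/(2L^2))$ for all $t\ge0$. Applied to $F=h_K$ this gives $\Pr[\max_{x\in K}\langle g,x\rangle\ge w_G(K)+t]\le\exp(-t^2/(2L^2))$. Second, I would show $L\le\sqrt{2\pi}\,w_G(K)$: picking $x^\star\in\cl(K)$ with $\|x^\star\|_2$ arbitrarily close to $L$ and using that the origin lies in $K$, we get $\max_{x\in K}\langle g,x\rangle\ge\max(0,\langle g,x^\star\rangle)$ for every $g$, hence $w_G(K)\ge\EE[\max(0,\langle g,x^\star\rangle)]=\|x^\star\|_2\,\EE[\max(Z,0)]$ with $Z\sim N(0,1)$; since $\EE[\max(Z,0)]=1/\sqrt{2\pi}$ this yields $w_G(K)\ge L/\sqrt{2\pi}$. (Alternatively, using only translation-invariance of $w_G$: take near-endpoints $a,b$ of a diameter of $K$, observe $h_K(g)+h_K(-g)\ge|\langle g,a-b\rangle|$, take expectations to get $w_G(K)\ge\operatorname{diam}(K)/\sqrt{2\pi}$, and use $L\le\operatorname{diam}(K)$, which holds because $0\in K$.)

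Finally I would set $t=\alpha\,w_G(K)$ in the concentration bound and substitute $L\le\sqrt{2\pi}\,w_G(K)$:
\[
\Pr\Big[\max_{x\in K}\langle g,x\rangle\ge(1+\alpha)\,w_G(K)\Big]\le\exp\!\Big(-\frac{\alpha^2 w_G(K)^2}{2L^2}\Big)\le\exp\!\Big(-\frac{\alpha^2 w_G(K)^2}{4\pi\,w_G(K)^2}\Big)=\exp\!\Big(-\frac{\alpha^2}{4\pi}\Big),
\]
and taking complements gives \eqref{eq:wgconcentration}. The case $w_G(K)=0$ is degenerate: combined with $0\in K$ it forces $K=\{0\}$ (any other point of $K$ would contribute a strictly positive amount to $w_G(K)$ by the estimate above), and then the inequality reads $1\ge1-\exp(-\alpha^2/(4\pi))$.

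I do not expect a serious obstacle: the proof is an assembly of textbook facts. The two points that need a little care are, first, using the \emph{sharp} constant $\tfrac1{2L^2}$ in Gaussian concentration together with the exact value $\EE[\max(Z,0)]=1/\sqrt{2\pi}$, so that the exponents combine to give precisely $\alpha^2/(4\pi)$ rather than a weaker constant; and second, the radius-to-width comparison $L\le\sqrt{2\pi}\,w_G(K)$, which is where the fact that $K$ contains the origin enters. Without such a normalization the Lipschitz constant of $h_K$ can be arbitrarily large relative to $w_G(K)$, so the clean statement really wants $K$ anchored near the origin — as it is in our application, where $K$ contains $C=\Den-\tfrac1n I_n\ni 0$.
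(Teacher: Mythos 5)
Your proof is correct and follows essentially the same route as the paper's: identify the quantity as the support function $h_K(g)$, apply Gaussian concentration for Lipschitz functions with $t=\alpha w_G(K)$, and bound the Lipschitz constant by $\sqrt{2\pi}\,w_G(K)$ --- the only difference being that the paper cites the inclusion $K\subseteq B(0,\sqrt{2\pi}\,w_G(K))$ from Vershynin's book while you derive it directly from $\EE[\max(Z,0)]=1/\sqrt{2\pi}$. Your closing remark is also apt: the paper's ``without loss of generality the origin is in the interior of $K$'' is really an extra hypothesis (the left-hand side of \eqref{eq:wgconcentration} is not translation-invariant), but it is satisfied in the application since $0\in C\subset P$.
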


We are now ready to prove Theorem \ref{thm:gw}. Assume $P$ is a polytope such that $C \subset P$ and $w_G(P) \leq 2w_G(C) \leq 2 \sqrt{2n} \leq 3\sqrt{n}$. We will show that the extension complexity of $P$ is at least $\exp(cn^{1/3})$ for some absolute constant $c > 0$.

\paragraph{Slack matrix} The upper bound on $w_G(P)$ allows us to generate valid inequalities for $P$. Indeed with large probability on $G \sim \N_0$ we know that $\max_{X \in P} \langle G , X \rangle \leq (1+\alpha) w_G(P)$. Also since we know that $C \subset P$ this allows us to construct a submatrix of the slack matrix of $P$. More precisely, define the following event:
\begin{equation}
\label{eq:GGevent}
\GG = \left\{G \in \S^n_0 :  -(1+\alpha)w_G(P) \leq \min_{X \in P} \langle G , X \rangle \text{ and } \max_{X \in P} \langle G , X \rangle \leq (1+\alpha)w_G(P)  \right\}.
\end{equation}
For large enough $\alpha > 0$, Lemma \ref{lem:wGconcentration} tells us that $\GG$ has positive probability when $G \sim \N_0$. If we fix $\alpha = 4$ for concreteness (for which $\Pr[\GG] > 0$), and recall that $w_G(P) \leq 3\sqrt{n}$ we get that for any $G \in \GG$, $\langle G , X \rangle + 15\sqrt{n} \geq \langle G , X \rangle + (1+\alpha) w_G(P) \geq 0$ is a valid inequality
 for $X \in P$.
Since by assumption $C \subset P$ then the (infinite) matrix
\begin{equation}
\label{eq:slackgw1}
(G,x) \in \GG \times S^{n-1} \mapsto \left\langle G , xx^T - \frac{1}{n} I_n \right\rangle + 15 \sqrt{n}
\end{equation}
is a submatrix of the slack matrix of $P$.
 By Yannakakis theorem, it follows that the extension complexity of $P$ is at least the nonnegative rank of the matrix \eqref{eq:slackgw1}. For the proof, it will be enough for us to consider $x \in \frac{1}{\sqrt{n}}\{-1,1\}^n$. Also note that since $\Tr(G) = 0$ for $G \in \GG$ then $\la G, xx^T - \frac{1}{n} \ra = x^T G x$.  The rest of the proof will thus be devoted to proving a lower bound on the nonnegative rank of the matrix:
\beq
\laq{slackgw}
S(G,x) = \frac{1}{15n\sqrt{n}} x^T G x + 1 \quad (G \in \GG, \; x \in H_n).
\eeq

\paragraph{Nonnegative rank of $S$} Consider a nonnegative factorization of $S$ of size $N$:
\beq
\laq{NMF-SGx}
S(G,x) = \sum_{i=1}^N g_i(G) f_i(x) \quad \forall G \in \GG, x \in H_n
\eeq
where $g_i,f_i \geq 0$. Since $\E_{x\in H_n}[S(G,x)] = 1$ for all $G \in \S^n_0$, we can normalize the $f_i$ and $g_i$ so that $\E[f_i] = 1$ and $\sum_{i=1}^N g_i \equiv 1$.

For any $G \in \S^n_0$ and $x \in H_n$, let $q_G(x) = x^T G x = 2 \sum_{i < j} G_{ij} x_i x_j$. The inner product of any two functions $f,g:H_n\rightarrow \RR$ is defined as $\langle f , g \rangle = \E_{x \in H_n} [ f(x) g(x) ]$. If we take the inner product of Equation \eqref{eq:NMF-SGx} with $q_G$ we get (using the definition of $S(G,x)$ in \eqref{eq:slackgw}):
\beq
\frac{1}{15 n\sqrt{n}} \|q_G\|_2^2 = \sum_{i=1}^N g_i(G) \langle f_i,q_G \rangle \quad \forall G \in \GG.
\eeq
We then take the expectation with respect to $G \sim \N_0 | \GG$ to get:
\beq
\laq{EGEx}
\frac{1}{15 n\sqrt{n}} \E_{G \sim \N_0 | \GG} \|q_G\|_2^2 = \E_{G\sim \N_0|\GG} \left[\sum_{i=1}^N g_i(G) \la f_i , q_G \ra \right].
\eeq
We now analyze the left-hand and right-hand sides of Equation \eqref{eq:EGEx}.

\begin{itemize}
\item \emph{Left-hand side of \eqref{eq:EGEx}}: For any fixed $G \in \S^n_0$ we have $\|q_G\|_2^2 = 4 \sum_{i < j} G_{ij}^2$. If $G \sim \N_0$ (without conditioning on $\GG$), then each $G_{ij} \sim \frac{1}{\sqrt{2}} N(0,1)$ and so we get $\E_{G\sim \N_0} \|q_G\|_2^2 = 4 \binom{n}{2} \frac{1}{2} = n(n-1)$. Now since $\GG$ is an event that has positive constant probability, and since $\|q_G\|_2^2$ concentrates around its mean (by standard concentration results for the squared Euclidean norm of Gaussian vectors), we know that we can assume $\E_{G \sim \N_0|\GG}[\|q_G\|_2^2] \geq c n(n-1)$ for some constant $0 < c < 1$, at the expense of replacing $\GG$ with the event $\GG \cap \{G : \|q_G\|_2^2 \geq c n(n-1)\}$ whose probability can still be bounded below by a positive constant. With this, the left-hand side of \eqref{eq:EGEx} is $\geq \frac{1}{15 n \sqrt{n}} c n(n-1) \geq c' \sqrt{n}$ where $c,c'$ are positive constants.
\item \emph{Right-hand side of \eqref{eq:EGEx}}: Just like in the proof of Theorem \ref{thm:main}, we will need to separately consider functions $f_i$ with large $\|\cdot \|_{\infty}$ norms from others. Let $M > 0$ be a parameter to be fixed later and let $I = \{i \in \{1,\ldots,N\} : \|f_i\|_{\infty} \geq M\}$. From the nonnegative factorization \eqref{eq:NMF-SGx} we have, for any $(G,x) \in \GG\times H_n$, $g_i(G) f_i(x) \leq S(G,x) \leq 2$ which gives $g_i(G) \leq 2/M$ for all $i \in I$ and $G \in \GG$. Thus for $i \in I$ and any $G \in \GG$ we have:
\[
g_i(G) \la f_i, q_G \ra \leq \frac{2}{M} \E[f_i] \max_{x \in H_n} q_G(x) \leq \frac{2}{M} 15 \sqrt{n}
\]
where in the last inequality we used the fact that $\E[f_i] = 1$ and $\max_{x \in H_n} q_G(x) \leq 15\sqrt{n}$ from the definition $G \in \GG$ in Equation \eqref{eq:GGevent}.
We can upper bound the RHS of \eqref{eq:EGEx} as follows:
\[
\begin{aligned}
\E_{G\sim \N_0|\GG} \left[\sum_{i=1}^N g_i(G) \la f_i , q_G \ra \right] &\leq 
\sum_{i \in I} \frac{2}{M} 15 \sqrt{n}
+
\E_{G\sim \N_0|\GG} \left[\sum_{i \in I^c} g_i(G) \la f_i , q_G \ra \right]\\
&\leq \frac{30}{M} \sqrt{n} N + \E_{G\sim \N_0|\GG} \left[ \max_{i \in I^c} \left|\la f_i , q_G \ra\right| \right]\\
&\leq \frac{30}{M} \sqrt{n} N + \frac{1}{\Pr[\GG]} \E_{G\sim \N_0} \left[ \max_{i \in I^c} \left|\la f_i , q_G \ra\right| \right],
\end{aligned}
\]
where in the second line we used the fact that $g_i(G) \geq 0$ and $\sum_{i \in I^c} g_i(G) \leq 1$.
For any function $f:H_n\rightarrow \RR$, the random variable $\la f_i, q_G \ra$, where $G \sim \N_0$, is a centered Gaussian and its variance is given by
\[
\E_G [ (\langle f , q_G \rangle)^2] = 4 \sum_{i<j} \E_G [G_{ij}^2] \langle f, \chi_{ij} \rangle^2 = \frac{1}{2} \sum_{i<j} \langle f, \chi_{ij} \rangle^2 = \frac{1}{2} \|\proj_2 f\|_2^2
\]
where we used the notation $\chi_{ij}(x) = x_i x_j$ and where $\proj_2 f$ is the homogeneous degree 2 component in the Fourier expansion of $f$. It is a well-known fact that if $X_1,\ldots,X_k$ are centered Gaussian random variables with variance at most $\sigma^2$ then $\E[\max_{i=1,\ldots,k} |X_i|] \leq \sqrt{2\log k}$. Using this we get that the RHS of \eqref{eq:EGEx} is at most:
\beq
\laq{ubrhs33}
\frac{30}{M} \sqrt{n} N + \frac{1}{\Pr[\GG]} \frac{1}{\sqrt{2}} \left(\max_{i \in I^c} \|\proj_2 f_i\|_2\right) \sqrt{2 \log N}.
\eeq
It remains to evaluate $\|\proj_2 f_i\|_2$ for $i \in I^c$. This is where we will use hypercontractivity and use the fact that $\|f_i\|_{\infty} \leq M$ for $i \in I^c$. The following lemma appears in \cite[Lemma 2.3]{regev2011quantum} and we include the proof below for convenience.
\begin{lemma}
\label{lem:h2bound}
Assume $f:H_n \rightarrow \RR$ satisfies $f \geq 0$, and $\E[f] = 1$. Assume furthermore that $\max f \leq M$. Then $\|\proj_{2} f\|_2 \leq e \log(M)$.
\end{lemma}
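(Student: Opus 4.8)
The plan is to prove the lemma by hypercontractivity, bounding the second-level Fourier weight $\|\proj_2 f\|_2$ by a logarithm of the sup-norm $M$. I would first attenuate $f$ by the noise operator and isolate level $2$: for any $\rho \in (0,1]$ we have $T_\rho f = \sum_{k=0}^n \rho^k \proj_k f$, so $\|T_\rho f\|_2^2 = \sum_{k=0}^n \rho^{2k}\|\proj_k f\|_2^2 \ge \rho^4 \|\proj_2 f\|_2^2$, i.e., $\rho^2 \|\proj_2 f\|_2 \le \|T_\rho f\|_2$. Next I would apply the Bonami--Beckner inequality with $p = 1+\rho^2$, for which the dual exponent is $q = 1 + \rho^{-2}(p-1) = 2$, giving $\|T_\rho f\|_2 \le \|f\|_{1+\rho^2}$. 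To express the right-hand side through $M$, I would use $f \ge 0$, $\E[f]=1$ and $f \le M$ to write $\|f\|_{1+\rho^2}^{1+\rho^2} = \E[f\cdot f^{\rho^2}] \le M^{\rho^2}\E[f] = M^{\rho^2}$, hence $\|f\|_{1+\rho^2} \le M^{\rho^2/(1+\rho^2)} \le M^{\rho^2}$ (the last step since $M \ge \E[f] = 1$). Combining the two inequalities above yields $\|\proj_2 f\|_2 \le M^{\rho^2}/\rho^2$ for every $\rho \in (0,1]$.

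It then remains to optimize over $\rho$. Taking $\rho^2 = 1/\log M$ --- a legitimate choice in $(0,1]$ precisely when $\log M \ge 1$ --- gives $M^{\rho^2} = M^{1/\log M} = e$ and $1/\rho^2 = \log M$, hence $\|\proj_2 f\|_2 \le e\log M$. The complementary range $1 \le M < e$ is handled by the elementary estimate $\|\proj_2 f\|_2^2 \le \var(f) = \E[f^2]-1 \le M\,\E[f]-1 = M-1$, which lies below $e\log M$ on that interval.

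The argument is almost entirely mechanical, and the single point I would flag as its crux is the calibration of the noise level $\rho^{-2} = \log M$: this is exactly what converts the a priori bound $M^{\rho^2}/\rho^2$ into a quantity that grows only logarithmically in $M$. The one technical wrinkle is that this choice requires $M \ge e$, which is why the small-$M$ regime has to be disposed of separately.
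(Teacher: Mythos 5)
Your main argument is the paper's proof, up to the reparametrization $p=1+\rho^2$: isolate the level-two weight via $\rho^2\|\proj_2 f\|_2\le\|T_\rho f\|_2$, apply hypercontractivity with exponents $(1+\rho^2,2)$, bound $\|f\|_{1+\rho^2}\le M^{\rho^2}$ using $0\le f\le M$ and $\E[f]=1$, and calibrate $\rho^2=1/\log M$. That part is correct, and you are right that it requires $M\ge e$; the paper's proof has the same implicit restriction, since its choice $p=1+1/\log M$ must satisfy $p\le 2$.

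Your patch for the range $1\le M<e$, however, does not close, and in fact cannot. You establish $\|\proj_2 f\|_2^2\le M-1\le e\log M$, which only yields $\|\proj_2 f\|_2\le\sqrt{e\log M}$, and $\sqrt{e\log M}>e\log M$ whenever $e\log M<1$, i.e.\ for $M<e^{1/e}$. This is not a fixable slip: the inequality as stated genuinely fails for $M$ close to $1$. Take $B=\{x\in H_n: x_1=\dots=x_k=1\}$, so $\mu(B)=2^{-k}$, and set $f=\frac{1}{1-2^{-k}}(1-\mathbf{1}_B)$. Then $f\ge 0$, $\E[f]=1$, $\max f=M=(1-2^{-k})^{-1}$, while $\|\proj_2 f\|_2=\frac{2^{-k}}{1-2^{-k}}\sqrt{\binom{k}{2}}$ and $e\log M\le \frac{e\,2^{-k}}{1-2^{-k}}$, so the ratio is at least $\sqrt{\binom{k}{2}}/e>1$ for $k\ge 5$. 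The honest fix is either to restrict the statement to $M\ge e$ or to weaken the conclusion to something like $e\log(eM)$ (which your variance bound $\|\proj_2 f\|_2\le\sqrt{M-1}<e$ does cover when $M<e$). None of this affects the paper, which only invokes the lemma with $M$ of order $N$, far above $e$ --- but you should not claim the small-$M$ case as proved.
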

We can use this lemma to upper bound \eqref{eq:ubrhs33} by:
\beq
\laq{ubrhs44}
\frac{30}{M} \sqrt{n} N + \frac{1}{\Pr[\GG]} \frac{1}{\sqrt{2}} e \log(M) \sqrt{2 \log N}.
\eeq
\end{itemize}
To finish the proof we put together the lower and upper bounds above to get, recalling that $\Pr[\GG]$ is a positive constant:
\[
c \sqrt{n} \leq \frac{30}{M} \sqrt{n} N + c' \frac{1}{\sqrt{2}} e \log(M) \sqrt{2 \log N}.
\]
If we take $M = \frac{30}{c/2} N$ then the first term on the RHS is equal to $(c/2) \sqrt{n}$. This yields:
\[
(c/2) \sqrt{n} \leq c'' \log(N) \sqrt{\log(N)}
\]
which gives $N \geq \exp(cn^{1/3})$ as desired.

To finish it remains to prove Lemma \ref{lem:h2bound} and Lemma \ref{lem:wGconcentration}:

\begin{proof}[Proof of Lemma \ref{lem:h2bound}]
If $f$ has Fourier expansion $f = f_0 + f_1 + f_2 + \dots + f_n$ then
\[
\|\proj_2 f\|^2_2 = \frac{1}{\rho^4} (\rho^2 \|f_2\|_2)^2 \leq \frac{1}{\rho^4} \sum_{k} \rho^{2k} \|f_{k}\|_2^2 = \frac{1}{\rho^4} \|T_{\rho} f\|_2^2.
\]
Choose $\rho = \sqrt{p-1}$ with $1 \leq p \leq 2$ so that $\|T_{\rho} f\|_2 \leq \|f\|_{p}$ by hypercontractivity. Then we get:
\[
\|\proj_2 f\|_2 \leq \frac{1}{p-1} \|f\|_p.
\]
We have $\|f\|_p = (\E[f^p])^{1/p} \leq M^{(p-1)/p} \leq M^{p-1}$ where we used the fact that $f \geq 0$ and $\E[f] = 1$. This gives $\|\proj_2 f\|_2 \leq \frac{1}{p-1} M^{p-1}$. Choosing $p = 1 + \frac{1}{\log M}$ gives us $\|\proj_2 f\|_2 \leq e \log M$.
\end{proof}

\begin{proof}[Proof of Lemma \ref{lem:wGconcentration}]
We can assume without loss of generality that the origin is in the interior of $K$.
Let $h_K(u) = \max_{x \in K} \langle u , x \rangle = \|u\|_{K^{\circ}}$ be the support function of $K$. The function $h_K$ is $L$-Lipschitz with $L = \sqrt{2\pi} w_G(K)$. Indeed, by \cite[Proposition 7.5.2]{vershynin2018high}, $K \subseteq B(0, \sqrt{2\pi} w_G(K))$ where $B(0,R)$ is the Euclidean ball centered at the origin with radius $R$. It thus follows that $B(0,\frac{1}{L}) \subset K^{\circ}$. Thus for any $u,v$, $|h_K(u) - h_K(v)| = |\|u\|_{K^{\circ}} - \|v\|_{K^{\circ}}| \leq \|u-v\|_{K^{\circ}} \leq L \|u-v\|_2$ as needed.

Gaussian concentration tells us that if $f:\RR^d\rightarrow \RR$ is $L$-Lipschitz then $\Pr_{g\sim N(0,I_d)} [ f(g) \geq \E f + t ] \leq \exp\left(-\frac{t^2}{2L^2}\right)$, see e.g., \cite[Theorem 5.25]{ABMB}. We apply this result with $f = h_K$ and $t = \alpha w_G(K)$ which gives $\Pr[h_K(g) \geq (1+\alpha) w_G(K)] \leq \exp(-\frac{-\alpha^2}{4\pi})$ as desired.
\end{proof}

\paragraph{Acknowledgments} I would like to thank Omar Fawzi for his encouragements and for bringing the paper \cite{regev2011quantum} to my attention. I would also like to thank Pablo Parrilo who suggested looking at approximation in terms of the Gaussian width, which led to Theorem \ref{thm:gw}. I am grateful to Guillaume Aubrun for discussions on an earlier draft of the paper, and for showing me how Theorem \ref{thm:main} applies with general $A$ and giving me the permission to include his argument (previously Theorem \ref{thm:main} only dealt with the case $A=\frac{1}{n} I_n$). I would also like to thank Samuel Fiorini for discussions on \cite{braun2012approximation} and James Saunderson and Pravesh Kothari for various discussions related to this manuscript.

\bibliography{../../bib/nonnegative_rank}
\bibliographystyle{alpha}

\end{document}